\pgfplotsset{compat=1.18}
\newtheorem{thm}{Theorem}[section]
\newtheorem{lem}[thm]{Lemma}
\newtheorem{prop}[thm]{Proposition}
\newtheorem{conj}[thm]{Conjecture}
\theoremstyle{definition}
\newtheorem{remark}[thm]{Remark}
\numberwithin{equation}{section}
\DeclareMathOperator{\adj}{adj}
\DeclareMathOperator{\diag}{diag}
\DeclareMathOperator{\Ree}{Re}
\newcommand{\J}{J}
\newcommand{\one}{\mathbf{1}}
\newcommand{\mnr}{M_n(\mathbb{R})}
\newcommand{\R}{\mathbb{R}}
\newcommand{\C}{\mathbb{C}}
\newcommand{\rank}{\operatorname*{rank}}
\begin{document}
	
	\title[Johnson's determinantal identity]{
	Johnson's determinantal identity for contiguous minors of Toeplitz matrices, with an accretive extension}
	
	\author[T.~Zhang]{Teng Zhang}
	\address{School of Mathematics and Statistics, Xi'an Jiaotong University, Xi'an 710049, P. R. China}
	\email{teng.zhang@stu.xjtu.edu.cn}
	
	\subjclass[2020]{15B05, 15A15, 15A42}
	\keywords{Toeplitz matrix, contiguous minors, skew-symmetric matrix, Desnanot--Jacobi identity, rank-one update, accretive matrix, adjugate}
	
	\begin{abstract}
		Let $A$ be an $n\times n$ real Toeplitz matrix satisfying
		$A+A^{\top}=2\J_n$, where $\J_n$ is the all-ones matrix.
		If $A_r(i,j)$ denotes the $r\times r$ contiguous submatrix of $A$ consisting of rows
		$i,i+1,\dots,i+r-1$ and columns $j,j+1,\dots,j+r-1$, then for every $n\ge 2$ one has
		\[
		\det A_{n-1}(1,2)+\det A_{n-1}(2,1)=2\det A_{n-1}(1,1).
		\]
		This confirms a conjecture of Charles R.~Johnson (2003). The proof combines a rank-one determinant expansion with Dodgson's condensation formula, and then invokes a polynomial-identity argument in the Toeplitz parameters: after obtaining an equality of
		squares in the integral domain $\mathbb{Z}[b_1,\dots,b_{n-1}]$, we factor it to deduce an identity
		up to sign and determine the sign by a suitable specialization.
	 We also give an extension of the Bayat--Teimoori arithmetic--geometric mean identity:
		for every real accretive matrix $A$, one has the sharp inequality
		\[
		\sqrt{\det A_{n-1}(1,1)\ \det A_{n-1}(2,2)}
		\ \ge\
		\left|\frac{\det A_{n-1}(1,2)+\det A_{n-1}(2,1)}{2}\right|,
		\]
	with equality whenever the symmetric part has rank one, i.e.\ $A+A^{\top}=\alpha\,ww^{\top}$
	for some $\alpha\in\mathbb R$ and $w\in\mathbb R^n\setminus\{0\}$,
	recovering the Bayat--Teimoori equality as a special case.
	\end{abstract}
	
	\maketitle
	
	\section{Introduction}
	
Let $\mnr$ be the set of all $n\times n$ real matrices.
	For $A\in \mnr$, integers $1\le r\le n$, and $1\le i,j\le n-r+1$, we write
	\[
	A_r(i,j):=\bigl(a_{pq}\bigr)_{\substack{i\le p\le i+r-1\\ j\le q\le j+r-1}}
	\]
	for the $r\times r$ contiguous submatrix of $A$ starting at row $i$ and column $j$. We denote by $A^\top$ the transpose of $A$. For two Hermitian matrices $X, Y$, we say $X < (\le)\, Y$ if $X-Y$ is positive (semi)definite.
	
	A matrix $A=(a_{ij})_{1\le i,j\le n}$ is \emph{Toeplitz} if $a_{ij}$ depends only on $j-i$ (i.e.\ entries
	are constant along diagonals). Let $\J_n$ be the $n\times n$ matrix whose entries are all $1$.
	
	At the combinatorial workshop on combinatorics, linear algebra, and graph theory held in Tehran in 2003,
	Charles R.~Johnson posed to M.~Bayat and H.~Teimoori a problem (see the statement in \cite{BT11})
	concerning the evaluation of determinants via Dodgson's condensation formula \cite{Dod66,Zei97}
(also known as  the Lewis Carroll identity or the Desnanot--Jacobi identity). Most proofs of Dodgson's condensation formula are algebraic; the first complete proof for arbitrary \(n\) was given by Jacobi in 1833~\cite{Mu06}. Zeilberger provided a combinatorial proof in 1997~\cite{Zei97}. Dodgson's condensation formula plays a central role in Dodgson condensation, an iterative technique for evaluating determinants, see \cite{Mui60}.
	
	\begin{thm}[Desnanot--Jacobi (Dodgson condensation)]\label{thm:dodgson}
		Let $n\ge 3$ and $A\in \mnr$. Then
		\begin{equation}\label{eq:dodgson}
			(\det A)\,\det A_{n-2}(2,2)
			=
			\det\!\left[
			\begin{array}{cc}
				\det A_{n-1}(1,1) & \det A_{n-1}(1,2) \\
				\det A_{n-1}(2,1) & \det A_{n-1}(2,2)
			\end{array}
			\right].
		\end{equation}
	\end{thm}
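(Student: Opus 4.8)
The plan is to reduce to the generic (invertible) case and then read off \emph{both} sides of \eqref{eq:dodgson} from a single $2\times 2$ minor of the inverse matrix. Both sides of \eqref{eq:dodgson} are polynomials in the $n^2$ entries of $A$, so it suffices to establish the identity on the Zariski-dense set $\{\det A\neq 0\}$; the general case then follows by continuity. Hence I assume throughout that $A$ is invertible and set $B:=A^{-1}$, so that $\det B=(\det A)^{-1}$ and, by the cofactor formula, $B_{ij}=(-1)^{i+j}\det(\widehat A_{ji})/\det A$, where $\widehat A_{ji}$ denotes $A$ with row $j$ and column $i$ deleted.

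The key observation is that the four corner $(n-1)\times(n-1)$ minors appearing in \eqref{eq:dodgson} are, up to sign, exactly the four entries of $B$ indexed by rows and columns in $\{1,n\}$: deleting row $1$ and column $1$ leaves $A_{n-1}(2,2)$, deleting row $n$ and column $n$ leaves $A_{n-1}(1,1)$, deleting row $n$ and column $1$ leaves $A_{n-1}(1,2)$, and deleting row $1$ and column $n$ leaves $A_{n-1}(2,1)$. Consequently
\[
B_{11}=\frac{\det A_{n-1}(2,2)}{\det A},\qquad B_{nn}=\frac{\det A_{n-1}(1,1)}{\det A},
\]
\[
B_{1n}=(-1)^{1+n}\frac{\det A_{n-1}(1,2)}{\det A},\qquad B_{n1}=(-1)^{n+1}\frac{\det A_{n-1}(2,1)}{\det A}.
\]
I will now evaluate the $2\times 2$ minor $\det B[\{1,n\}\mid\{1,n\}]=B_{11}B_{nn}-B_{1n}B_{n1}$ in two ways. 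Reading off the entries above and using $(-1)^{1+n}(-1)^{n+1}=1$, the off-diagonal signs cancel, so
\[
\det B[\{1,n\}\mid\{1,n\}]=\frac{\det A_{n-1}(1,1)\det A_{n-1}(2,2)-\det A_{n-1}(1,2)\det A_{n-1}(2,1)}{(\det A)^2},
\]
which is precisely the right-hand side of \eqref{eq:dodgson} divided by $(\det A)^2$.

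For the second evaluation I use the Schur-complement (block-inverse) formula relative to the index partition $\{1,n\}\cup\{2,\dots,n-1\}$. After conjugating $A$ by the permutation that brings the indices $\{1,n\}$ to the front (the same permutation acts on rows and columns, so $\det A$ and every symmetric principal minor are unchanged), the $\{1,n\}$-block of $B=A^{-1}$ equals the inverse of the Schur complement of the central block $A_{n-2}(2,2)$. Taking determinants in the factorization $\det A=\det A_{n-2}(2,2)\cdot\det(\text{Schur complement})$ gives
\[
\det B[\{1,n\}\mid\{1,n\}]=\frac{1}{\det(\text{Schur complement})}=\frac{\det A_{n-2}(2,2)}{\det A}.
\]
Equating the two evaluations and multiplying through by $(\det A)^2$ yields \eqref{eq:dodgson} on $\{\det A\neq 0\}$, hence everywhere.

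The step I expect to require the most care is the sign bookkeeping in the off-diagonal cofactor entries $B_{1n},B_{n1}$: the identity comes out clean precisely because the two signs coincide (both equal $(-1)^{n+1}$), so their product is $+1$ and the cross term retains the minus sign demanded by the $2\times 2$ determinant. A secondary technical point is that the Schur-complement step presupposes invertibility of the central block $A_{n-2}(2,2)$; this costs nothing, since one may first prove the identity on the smaller dense set where both $\det A$ and $\det A_{n-2}(2,2)$ are nonzero and then extend by the same polynomial-identity argument.
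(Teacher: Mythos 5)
Your proof is correct. Note, however, that the paper does not prove Theorem~\ref{thm:dodgson} at all: it is quoted as a classical result, with the Introduction pointing to Jacobi's 1833 algebraic proof and Zeilberger's 1997 combinatorial proof, and the identity is then used as a black box in Section~\ref{sec:Johnson}. What you have written is a clean, self-contained version of the standard algebraic argument, namely the $2\times 2$ case of Jacobi's theorem on complementary minors: the corner $2\times 2$ minor of $A^{-1}$ equals $\det A_{n-2}(2,2)/\det A$ by the Schur-complement factorization, while the adjugate formula identifies its entries with the four contiguous $(n-1)\times(n-1)$ minors (your sign bookkeeping $(-1)^{1+n}(-1)^{n+1}=1$ is right, and it matches the cofactor identifications the paper itself uses in the proof of Theorem~\ref{thm:accretive}). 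The two-stage genericity argument --- first requiring $\det A\neq 0$ and $\det A_{n-2}(2,2)\neq 0$, then extending by polynomial density --- is handled properly. Compared with the combinatorial route the paper cites, your approach buys a short proof from standard linear-algebra facts at the cost of the detour through invertibility; it would serve perfectly well as an appendix proof if the paper wished to be self-contained.
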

	Motivated by this framework, Johnson focused on a structured class of matrices--Toeplitz matrices, and conjectured a linear relation among \((n-1)\times(n-1)\) contiguous minors.

	\begin{conj}[Johnson]\label{conj:johnson}
		Let $n\ge 2$ and $A\in \mnr$ be a Toeplitz matrix satisfying	$A+A^{\top}=2\J_n$. Then
		\begin{equation}\label{eq:johnson}
			\det A_{n-1}(1,2)+\det A_{n-1}(2,1)=2\det A_{n-1}(1,1).
		\end{equation}
	\end{conj}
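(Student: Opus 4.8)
The plan is to write $A=\J_n+B$, where $B:=A-\J_n$ is a \emph{skew-symmetric} Toeplitz matrix (indeed $B+B^{\top}=A+A^{\top}-2\J_n=0$), determined by the parameters $b_1,\dots,b_{n-1}$ through its first row $(0,b_1,\dots,b_{n-1})$. Two structural identities drive everything. First, because $B$ is Toeplitz its two principal contiguous blocks coincide, $B_{n-1}(1,1)=B_{n-1}(2,2)$, and because $B$ is skew-symmetric the off-diagonal blocks satisfy $B_{n-1}(2,1)=-B_{n-1}(1,2)^{\top}$. Second, since $\J_{n-1}=\one\one^{\top}$ has rank one, the matrix determinant lemma gives, for every block $M$, $\det(\J_{n-1}+M)=\det M+\one^{\top}\adj(M)\one$. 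Applying this to the three submatrices expresses $D:=\det A_{n-1}(1,1)$, $P:=\det A_{n-1}(1,2)$ and $Q:=\det A_{n-1}(2,1)$ as $\det(\cdot)+\one^{\top}\adj(\cdot)\one$ of the corresponding blocks of $B$, so that the conjecture $P+Q=2D$ becomes an identity in $\mathbb{Z}[b_1,\dots,b_{n-1}]$.

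Next I would exploit two elementary facts about a skew-symmetric matrix $S$ of order $k$: if $k$ is odd then $\det S=0$, while if $k$ is even then $\adj(S)$ is again skew-symmetric, whence $\one^{\top}\adj(S)\one=0$. Combined with $\det B_{n-1}(2,1)=(-1)^{n-1}\det B_{n-1}(1,2)$ and the analogous relation $\one^{\top}\adj(B_{n-1}(2,1))\one=(-1)^{n}\,\one^{\top}\adj(B_{n-1}(1,2))\one$, these collapse $P+Q$ to a single term: $P+Q=2\det B_{n-1}(1,2)$ when $n$ is odd, and $P+Q=2\,\one^{\top}\adj(B_{n-1}(1,2))\one$ when $n$ is even. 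The same facts reduce $2D$ to $2\det B_{n-1}(1,1)$ (for odd $n$) or to $2\,\one^{\top}\adj(B_{n-1}(1,1))\one$ (for even $n$). Thus the conjecture becomes a single scalar equality, of ``determinant type'' for odd $n$ and of ``adjugate type'' for even $n$.

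To obtain these equalities I would feed the structure into Dodgson condensation (Theorem~\ref{thm:dodgson}). Applied to $B$, and using $B_{n-1}(1,1)=B_{n-1}(2,2)$ together with the skew relation, it reads $(\det B)\,\det B_{n-2}(2,2)=\bigl(\det B_{n-1}(1,1)\bigr)^{2}-(-1)^{n-1}\bigl(\det B_{n-1}(1,2)\bigr)^{2}$. For odd $n$ the left side vanishes, since $B$ has odd order, yielding at once the equality of squares $\bigl(\det B_{n-1}(1,1)\bigr)^{2}=\bigl(\det B_{n-1}(1,2)\bigr)^{2}$, that is $(2D)^{2}=(P+Q)^{2}$. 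For even $n$ one first observes that $\det A=\det B$ and $\det A_{n-2}(2,2)=\det B_{n-2}(2,2)$, because the correcting terms $\one^{\top}\adj(B)\one$ and $\one^{\top}\adj(B_{n-2}(2,2))\one$ vanish (even-order skew blocks); Dodgson on $B$ then gives $(\det A)\,\det A_{n-2}(2,2)=\bigl(\det B_{n-1}(1,2)\bigr)^{2}$, while Dodgson on $A$ gives $D^{2}-PQ=(\det A)\,\det A_{n-2}(2,2)$. Since in this case $P=\one^{\top}\adj(B_{n-1}(1,2))\one+\det B_{n-1}(1,2)$ and $Q=\one^{\top}\adj(B_{n-1}(1,2))\one-\det B_{n-1}(1,2)$, substituting $PQ$ and simplifying again produces $(2D)^{2}=(P+Q)^{2}$. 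In both parities I arrive at the single polynomial identity $(P+Q)^{2}=4D^{2}$ in $\mathbb{Z}[b_1,\dots,b_{n-1}]$.

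Finally, since $\mathbb{Z}[b_1,\dots,b_{n-1}]$ is an integral domain, the factorization $(P+Q-2D)(P+Q+2D)=0$ forces one factor to be the zero polynomial. To select the correct sign I would specialize to a convenient point, say $b_1=1$ and $b_2=\dots=b_{n-1}=0$, for which one checks $D\neq 0$ and hence $P+Q+2D\neq 0$, ruling out the spurious factor and leaving $P+Q=2D$. The base case $n=2$, where Dodgson does not apply, is immediate, being literally the hypothesis $a_{12}+a_{21}=2a_{11}$. I expect the main obstacle to be the even-$n$ case: there every individual skew block of odd order has vanishing determinant, so the whole content of the identity is carried by the adjugate quadratic forms $\one^{\top}\adj(\cdot)\one$, and extracting a clean equality of squares for these forces one to combine both Dodgson identities (on $A$ and on $B$) with the vanishing of $\one^{\top}\adj(B)\one$ and $\one^{\top}\adj(B_{n-2}(2,2))\one$. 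A secondary delicate point is making the sign-fixing specialization rigorous, i.e.\ verifying that the chosen parameter values keep $D$ nonzero for every $n$.
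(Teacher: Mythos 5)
Your overall strategy coincides with the paper's: write $A=\J_n+B$ with $B$ skew\nobreakdash-symmetric Toeplitz, use the rank-one expansion $\det(\J_{n-1}+M)=\det M+\one^{\top}\adj(M)\one$ together with the parity facts about skew-symmetric matrices to reduce the conjecture to $\det C=\det K$ ($n$ odd) or $\one^{\top}\adj(C)\one=\one^{\top}\adj(K)\one$ ($n$ even), where $K=B_{n-1}(1,1)$ and $C=B_{n-1}(1,2)$, then derive the equality of squares from Dodgson and fix the sign by an integral-domain argument plus specialization. Your odd-$n$ case is identical to the paper's. In the even-$n$ case you apply Dodgson twice (to $B$ and to $A$) and use the vanishing of $\one^{\top}\adj(B)\one$ and $\one^{\top}\adj(B_{n-2}(2,2))\one$ to equate the two left-hand sides; the paper instead runs Dodgson on the one-parameter family $B+t\J_n$ and extracts the $t^2$ coefficient. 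These are equivalent (your version amounts to comparing $t=0$ with $t=1$), and yours is a perfectly valid, arguably slightly leaner, variant. All the intermediate identities you state (the transpose relations for the off-diagonal blocks, $PQ=s(C)^2-(\det C)^2$, etc.) check out.

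The one genuine gap is the sign determination. From $(P+Q-2D)(P+Q+2D)=0$ in $\mathbb{Z}[b_1,\dots,b_{n-1}]$ you conclude correctly that one factor vanishes identically, but your criterion for ruling out the factor $P+Q+2D$ is ``one checks $D\neq 0$ and hence $P+Q+2D\neq 0$'' at the specialization $b_1=1$, $b_k=0$ ($k\ge 2$). This inference is invalid: since $(P+Q)^2=4D^2$ holds at every point, the specialization a priori satisfies $P+Q=\pm 2D$, and $D\neq 0$ alone is entirely consistent with $P+Q=-2D\neq 0$ there, in which case $P+Q+2D=0$. To exclude the spurious factor you must actually evaluate \emph{both} sides at the specialization and see that they agree with the same sign. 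For odd $n$ this means computing $\det C=1=\det K$ (easy: $C$ is unit lower triangular, $K$ is tridiagonal with $\pm1$ off-diagonals). For even $n$ it means computing $\one^{\top}\adj(C)\one$ and $\one^{\top}\adj(K)\one$, which is the genuinely nontrivial part of the paper's proof: one uses $\det C=1$ and the Neumann series $C^{-1}=(I-L^2)^{-1}=I+L^2+\cdots$ to get $\one^{\top}C^{-1}\one=\bigl(\tfrac{n}{2}\bigr)^2$, and the rank-one structure $\adj(K)=uu^{\top}$ with $u=(1,0,1,0,\dots,1)^{\top}$ to get the same value for $K$. So the missing step is not a formality; your closing remark that the only delicate point is ``verifying that $D$ is nonzero'' identifies the wrong quantity to verify.
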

	
	In this paper, we confirm Conjecture \ref{conj:johnson}.
	
	A closely related result of Bayat--Teimoori \cite[Theorem 1.3]{BT11} is an arithmetic--geometric mean identity for the
	same minors under the rank-one symmetric-part hypothesis.
	
	\begin{thm}[Bayat--Teimoori]\label{thm:BT}
		Let $n\ge 2$ and $A\in \mnr$. If $A + A^{\top} = \alpha \J_n$ for some $\alpha\in \R$, then
		\begin{equation*}
			\sqrt{\det A_{n-1}(1,1)\;\det A_{n-1}(2,2)}
			=
			\left|\frac{\det A_{n-1}(1,2)+\det A_{n-1}(2,1)}{2}\right|.
		\end{equation*}
	\end{thm}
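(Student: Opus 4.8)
The plan is to reduce the identity to a single Pfaffian factorization and then square. First I would record the structural consequences of the hypothesis $A+A^{\top}=\alpha\J_n$. Writing $A=\tfrac{\alpha}{2}\J_n+M$ with $M=\tfrac12(A-A^{\top})$ skew-symmetric, one checks directly that every diagonal corner minor again has rank-one symmetric part, $A_{n-1}(i,i)+A_{n-1}(i,i)^{\top}=\alpha\J_{n-1}$, while the two off-diagonal corners are linked by $A_{n-1}(2,1)=\alpha\J_{n-1}-A_{n-1}(1,2)^{\top}$, so that $\det A_{n-1}(2,1)=\det\bigl(\alpha\J_{n-1}-A_{n-1}(1,2)\bigr)$. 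Since all four determinants are polynomials in $\alpha$ and the $\binom{n}{2}$ free entries $m_{ij}$ ($i<j$) of $M$, it suffices to establish $4\det A_{n-1}(1,1)\det A_{n-1}(2,2)=\bigl(\det A_{n-1}(1,2)+\det A_{n-1}(2,1)\bigr)^2$ as an identity in the integral domain $\mathbb{Z}[\alpha,\{m_{ij}\}]$; the case $n=2$ is immediate, so I assume $n\ge 3$. I would also note at the outset that $\det A_{n-1}(1,1)\det A_{n-1}(2,2)\ge 0$ on real data, so the left-hand square root is a genuine real number.

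The second step trades the target for a more symmetric one via Dodgson condensation (Theorem~\ref{thm:dodgson}). Writing $s_{ii}=\det A_{n-1}(i,i)$, $p=\det A_{n-1}(1,2)$, and $q=\det A_{n-1}(2,1)$, the condensation identity gives $s_{11}s_{22}-pq=\det A\cdot\det A_{n-2}(2,2)$, whence
\[
(p+q)^2-4s_{11}s_{22}=(p-q)^2-4\,\det A\cdot\det A_{n-2}(2,2).
\]
Thus the claim is equivalent to $(p-q)^2=4\,\det A\,\det A_{n-2}(2,2)$, a statement that no longer mentions the diagonal corners.

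The third step evaluates the right-hand side explicitly through the skew structure, using the matrix determinant lemma $\det(K+uv^{\top})=\det K+v^{\top}\adj(K)u$ together with the parity dichotomy for skew-symmetric matrices. Both $A$ and $A_{n-2}(2,2)$ have rank-one symmetric part, of sizes $n$ and $n-2$, which share a parity. When the size is even, the adjugate of the skew part is again skew, so the rank-one correction $\tfrac{\alpha}{2}\,\one^{\top}\adj(\cdot)\,\one$ vanishes and the determinant collapses to $\operatorname{Pf}(\text{skew part})^2$; when the size is odd, the skew part is singular and its adjugate is the rank-one matrix $ww^{\top}$ whose entries are the signed Pfaffians of the principal submatrices obtained by deleting one index, so the determinant equals $\tfrac{\alpha}{2}(\one^{\top}w)^2$. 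In either parity $4\,\det A\,\det A_{n-2}(2,2)$ is therefore an explicit perfect square $Z^2$ in $\mathbb{Z}[\alpha,\{m_{ij}\}]$, namely $Z=2\operatorname{Pf}(M)\operatorname{Pf}(M_{n-2}(2,2))$ for $n$ even and $Z=\alpha\,(\one^{\top}u)(\one^{\top}v)$ for $n$ odd, with $u,v$ the Pfaffian-minor vectors of $M$ and of its central submatrix.

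It then remains to prove the unsquared identity $\det A_{n-1}(1,2)-\det A_{n-1}(2,1)=\pm Z$, and this is the crux and the main obstacle. The left side is the difference of two determinants of shifted skew-plus-rank-one blocks; expanding each by the matrix determinant lemma reduces the problem to showing that $\one^{\top}\adj\bigl(A_{n-1}(1,2)\bigr)\one$ factors as a product of two Pfaffian-minor sums across the diagonal shift. I expect to prove this by a Pfaffian expansion identity (the Pfaffian analogue of Desnanot--Jacobi), and to pin down the global sign $\pm$ by a single convenient specialization, for instance the tridiagonal skew matrix, where every quantity is available in closed form. Squaring the resulting identity and feeding it back through the second step yields $(p+q)^2=4s_{11}s_{22}$, and taking square roots gives the theorem. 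The hardest part is precisely this Pfaffian factorization of the off-diagonal determinant difference for general $n$; once it is in hand, the remainder is bookkeeping and a sign check, in the same equality-of-squares-and-specialization spirit as the proof of Conjecture~\ref{conj:johnson}.
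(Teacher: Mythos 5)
A preliminary remark: the paper itself does not prove Theorem~\ref{thm:BT}; it is quoted from Bayat--Teimoori \cite{BT11} and used as a black box in the proof of Theorem~\ref{thm:rankone}, so there is no in-paper proof to match your attempt against. Judged on its own terms, your outline is sound up to and including the Dodgson reduction of the claim to $(p-q)^2=4\det A\cdot\det A_{n-2}(2,2)$, and your Pfaffian evaluation of the right-hand side as a perfect square $Z^2$ is correct. But the step you yourself call ``the crux and the main obstacle'' --- the unsquared factorization $\det A_{n-1}(1,2)-\det A_{n-1}(2,1)=\pm Z$, to be obtained from an unstated ``Pfaffian analogue of Desnanot--Jacobi'' plus a sign check --- is never proved. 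That is a genuine gap, not bookkeeping: it is precisely the nontrivial content of the theorem, and without it the argument does not close.

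The gap can in fact be closed with the two tools the paper already uses, and your detour through an unsquared identity is unnecessary because the target $(p-q)^2=4\det A\,\det A_{n-2}(2,2)$ is already an equality of squares. Write $M:=\tfrac12(A-A^{\top})$, $C:=M_{n-1}(1,2)$, and $s(X):=\one^{\top}\adj(X)\one$, so that $A_{n-1}(1,2)=\tfrac{\alpha}{2}\J_{n-1}+C$ and $A_{n-1}(2,1)=\tfrac{\alpha}{2}\J_{n-1}-C^{\top}$. If $n$ is even, Lemmas~\ref{lem:rankone} and~\ref{lem:skewfacts} give $p-q=2\det C$, $\det A=\det M$ and $\det A_{n-2}(2,2)=\det M_{n-2}(2,2)$, while Dodgson applied to $M$ itself yields $\det M\,\det M_{n-2}(2,2)=0-\det C\,\det(-C^{\top})=(\det C)^2$, which is exactly the claim. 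If $n$ is odd, the same lemmas give $p-q=\alpha\,s(C)$, $\det A=\tfrac{\alpha}{2}s(M)$ and $\det A_{n-2}(2,2)=\tfrac{\alpha}{2}s(M_{n-2}(2,2))$, and applying Dodgson to the one-parameter family $M+t\J_n$ and comparing coefficients of $t^2$ --- the very device used in Case~2 of the paper's proof of Conjecture~\ref{conj:johnson} --- yields $s(M)\,s(M_{n-2}(2,2))=s(C)^2$, again exactly the claim. With this substitution your plan becomes a complete, self-contained proof requiring neither Pfaffians nor a sign determination.
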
 
	We present the following rank-one symmetric part generalization of Theorem \ref{thm:BT}.
		\begin{thm}\label{thm:rankone}
		Let $A\in\mnr$ satisfy
$	A+A^{\top}=\alpha\,ww^{\top},$ where $
 \alpha\in\R, w\in\R^n \backslash \{0\}$. Then 
		\[
		\sqrt{\det A_{n-1}(1,1)\ \det A_{n-1}(2,2)}
		=
		\left|\frac{\det A_{n-1}(1,2)+\det A_{n-1}(2,1)}{2}\right|.
		\]
		In particular, taking $w=(1,\dots,1)^{\top}$ recovers Theorem~\ref{thm:BT}.
	\end{thm}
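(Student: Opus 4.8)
The plan is to deduce Theorem~\ref{thm:rankone} from the all-ones case, Theorem~\ref{thm:BT}, by a diagonal congruence followed by a polynomial-identity (density) argument. Squaring the desired equality and clearing the denominator, it suffices to establish the single polynomial relation
\[
F(A):=4\det A_{n-1}(1,1)\,\det A_{n-1}(2,2)-\bigl(\det A_{n-1}(1,2)+\det A_{n-1}(2,1)\bigr)^{2}=0 ,
\]
because $F(A)=0$ forces $\det A_{n-1}(1,1)\det A_{n-1}(2,2)=\tfrac14\bigl(\det A_{n-1}(1,2)+\det A_{n-1}(2,1)\bigr)^{2}\ge 0$, which at once justifies the real square root and yields the stated identity. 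Thus the whole theorem is encoded in the vanishing of $F$.

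First I would treat the generic case in which $w$ has no zero entry. Put $D=\diag(1/w_1,\dots,1/w_n)$ and $\tilde A=DAD$. Then $Dw=\one$, so $\tilde A+\tilde A^{\top}=D(A+A^{\top})D=\alpha\,(Dw)(Dw)^{\top}=\alpha\,\one\,\one^{\top}=\alpha\,\J_n$, placing $\tilde A$ under the hypothesis of Theorem~\ref{thm:BT}; hence $F(\tilde A)=0$. The bookkeeping step is to track how the four contiguous minors transform. Writing $\pi_1=\prod_{i=1}^{n-1}d_i$ and $\pi_2=\prod_{i=2}^{n}d_i$, one checks entrywise that $\tilde A_{n-1}(1,1)=\diag(d_1,\dots,d_{n-1})\,A_{n-1}(1,1)\,\diag(d_1,\dots,d_{n-1})$, and analogously for the other blocks, giving $\det\tilde A_{n-1}(1,1)=\pi_1^{2}\det A_{n-1}(1,1)$, $\det\tilde A_{n-1}(2,2)=\pi_2^{2}\det A_{n-1}(2,2)$, $\det\tilde A_{n-1}(1,2)=\pi_1\pi_2\det A_{n-1}(1,2)$, and $\det\tilde A_{n-1}(2,1)=\pi_1\pi_2\det A_{n-1}(2,1)$. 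The point is that the two off-diagonal minors acquire the same factor $\pi_1\pi_2$, so $F(\tilde A)=\pi_1^{2}\pi_2^{2}\,F(A)$; since $\pi_1\pi_2\neq0$ this yields $F(A)=0$ whenever all $w_i\neq0$.

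To remove the nonvanishing hypothesis I would fix the skew-symmetric part $K=\tfrac12(A-A^{\top})$ of a given admissible $A$ and regard $g(\alpha,w):=F\bigl(K+\tfrac{\alpha}{2}ww^{\top}\bigr)$ as a polynomial in $(\alpha,w_1,\dots,w_n)\in\R^{n+1}$; this is legitimate since the entries of $K+\tfrac{\alpha}{2}ww^{\top}$ are polynomials in these parameters and $F$ is a polynomial in the matrix entries. The previous paragraph shows that $g$ vanishes on the set $\{\prod_i w_i\neq0\}$, which is the complement of a hypersurface and hence dense; a polynomial vanishing there must vanish identically, the coefficientwise statement being a polynomial identity over the infinite field $\R$. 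Therefore $g\equiv0$. Applying this with $K$ the skew part of the original $A$ gives $F(A)=0$ for every $A$ with $A+A^{\top}=\alpha ww^{\top}$, which is the theorem; specializing $w=(1,\dots,1)^{\top}$ recovers Theorem~\ref{thm:BT}.

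I expect the main obstacle to be organizational rather than conceptual. The delicate points are verifying the exact powers of $\pi_1,\pi_2$ attached to each minor—in particular the homogeneity $F(\tilde A)=\pi_1^{2}\pi_2^{2}F(A)$, which hinges on the two off-diagonal blocks receiving the identical factor—and confirming the density step is valid over $\R$. One subtlety worth flagging is that the reduction genuinely requires $F$ in its squared, denominator-free form: the square root and the absolute value in the statement are not polynomial operations, so the identity must be recast as the vanishing of $F$ before the congruence and density arguments can be brought to bear.
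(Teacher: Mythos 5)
Your proposal is correct and follows essentially the same route as the paper: a diagonal congruence $\tilde A = DAD$ with $D=\diag(1/w_1,\dots,1/w_n)$ reducing to Theorem~\ref{thm:BT} when all $w_i\neq 0$, followed by a continuity/density argument to remove that restriction (the paper perturbs $w$ to $w+\varepsilon\one$ and lets $\varepsilon\downarrow 0$, which is the same idea as your Zariski-density step). Your explicit remark that the identity must first be recast as the vanishing of the polynomial $F$ before taking limits is a nice touch that the paper leaves implicit.
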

	
Recall that a matrix \(A\in M_n(\mathbb R)\) is called (real) accretive  if its real part $\frac{A+A^{\top}}{2}\ge 0$. It is \emph{strictly real accretive} if $\frac{A+A^{\top}}{2}>0$. In view of the condition of Theorem \ref{thm:rankone}, we prove a sharp accretive extension of Theorem
\ref{thm:rankone}.
	
		\begin{thm}\label{thm:accretive}
		Let $A\in\mnr$ be real accretive, i.e.\ $\frac{A+A^{\top}}{2}\ge 0$. Then
		\begin{equation}\label{eq:ineq}
			\sqrt{\det A_{n-1}(1,1)\ \det A_{n-1}(2,2)}
			\ \ge\
			\left|\frac{\det A_{n-1}(1,2)+\det A_{n-1}(2,1)}{2}\right|.
		\end{equation}
	\end{thm}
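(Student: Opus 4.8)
The plan is to reinterpret the four contiguous minors in \eqref{eq:ineq} as entries of the adjugate $\adj(A)$ and to show that $\adj(A)$ inherits accretivity from $A$. First I would record the dictionary between the minors and the corner entries of $B:=\adj(A)$. Writing $M_{ij}$ for the minor of $A$ obtained by deleting row $i$ and column $j$, the four blocks are $\det A_{n-1}(1,1)=M_{nn}$, $\det A_{n-1}(2,2)=M_{11}$, $\det A_{n-1}(1,2)=M_{n1}$, and $\det A_{n-1}(2,1)=M_{1n}$. Since $\adj(A)_{ij}=(-1)^{i+j}M_{ji}$, this gives $B_{nn}=\det A_{n-1}(1,1)$, $B_{11}=\det A_{n-1}(2,2)$, and
\[
B_{1n}+B_{n1}=(-1)^{n+1}\bigl(\det A_{n-1}(1,2)+\det A_{n-1}(2,1)\bigr),
\]
so that the target inequality \eqref{eq:ineq} is exactly $\sqrt{B_{11}B_{nn}}\ \ge\ \tfrac12\,|B_{1n}+B_{n1}|$.

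The core of the argument is the claim that $B=\adj(A)$ is accretive whenever $A$ is. I would first treat the strictly accretive case $S:=\tfrac12(A+A^{\top})>0$, where $A$ is invertible. Here $\det A>0$: for any eigenpair $Av=\lambda v$ one has $\Ree(\lambda)\,v^{*}v=v^{*}Sv>0$, so every eigenvalue has positive real part, and pairing complex conjugates shows the product of eigenvalues is positive. Thus $\adj(A)=(\det A)\,A^{-1}$. The key identity is
\[
A^{-1}+(A^{-1})^{\top}=A^{-1}(A+A^{\top})(A^{\top})^{-1}=2\,A^{-1}S(A^{-1})^{\top},
\]
which exhibits the symmetric part of $A^{-1}$ as a congruence of $S$; hence $A^{-1}$ is strictly accretive, and multiplying by the positive scalar $\det A$ shows $\adj(A)$ is accretive. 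For general accretive $A$ I would pass to the limit: $A+\varepsilon I$ is strictly accretive for $\varepsilon>0$, its adjugate is accretive, and since $\adj$ is a polynomial (hence continuous) map and the cone of accretive matrices is closed, $\adj(A)$ is accretive as well.

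Once $B=\adj(A)$ is known to be accretive, I would compress to the two-dimensional coordinate subspace spanned by $e_1$ and $e_n$. If $P$ is the $2\times n$ matrix with rows $e_1^{\top},e_n^{\top}$, then $PBP^{\top}=\begin{pmatrix}B_{11}&B_{1n}\\ B_{n1}&B_{nn}\end{pmatrix}$ and $PBP^{\top}+(PBP^{\top})^{\top}=P(B+B^{\top})P^{\top}\ge 0$, so this corner compression is again accretive. For a real accretive $2\times2$ matrix $\begin{pmatrix}a&b\\ c&d\end{pmatrix}$, positive semidefiniteness of its symmetric part forces $a,d\ge 0$ and $4ad\ge(b+c)^{2}$, i.e.\ $\sqrt{ad}\ge\tfrac12|b+c|$. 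Applying this to the corner compression and invoking the dictionary above yields \eqref{eq:ineq}; in particular $\det A_{n-1}(1,1),\det A_{n-1}(2,2)\ge 0$, so the left-hand square root is well defined.

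The individual steps are short, and I expect the only delicate points to be the sign bookkeeping in the minor-to-adjugate dictionary (where the parity $(-1)^{n+1}$ appears but is absorbed by the absolute value) and the justification that accretivity survives both the passage $A\mapsto\adj(A)$ in the singular case and the compression to the corner subspace. Sharpness of \eqref{eq:ineq} is not re-proved here: equality when the symmetric part has rank one is precisely the content of Theorem~\ref{thm:rankone}.
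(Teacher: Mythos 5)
Your proposal is correct and follows essentially the same route as the paper: translate the four contiguous minors into the corner entries of $\adj(A)$, prove that $\adj(A)$ inherits accretivity from $A$ (strictly accretive case via $\adj(A)=(\det A)\,A^{-1}$, then a continuity/closed-cone limit), and conclude from the accretivity of the $2\times 2$ compression onto coordinates $\{1,n\}$. The only difference is in a sub-lemma: you verify that $A^{-1}$ is accretive via the congruence $\Ree(A^{-1})=A^{-1}\,\Ree(A)\,A^{-\top}$ (and $\det A>0$ by an eigenvalue argument), whereas the paper uses the factorization $A=H^{1/2}(I+S)H^{1/2}$ with $S$ skew-symmetric; both are valid.
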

	
	This paper is organized as follows. In Section~\ref{sec:Johnson}, we prove Conjecture \ref{conj:johnson}. In Section~\ref{sec:rankone}, we prove Theorem \ref{thm:rankone}. In Section~\ref{sec:accretive}, we prove Theorem \ref{thm:accretive}.
\section{Proof of Conjecture~\ref{conj:johnson}}\label{sec:Johnson}
	
	Write $\one_m\in \R^{m}$ for the all-ones column vector and note that
	$\J_m=\one_m\one_m^{\top}$ has rank $1$.
	We use $\adj(X)$ for the adjugate  of a square matrix $X$.
	The following rank-one expansion along \(\J_m\) is a classical identity for determinants, see \cite[p.~24]{HJ13}. To avoid any confusion for the reader, we provide a proof below.

	\begin{lem}\label{lem:rankone}
		Let $X$ be an $m\times m$ real matrix and let $t\in\R$. Then
		\[
		\det(X+t\J_m)=\det(X)+t\,\one_m^{\top}\adj(X)\one_m.
		\]
	\end{lem}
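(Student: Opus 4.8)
The plan is to exploit the rank-one structure $\J_m=\one_m\one_m^{\top}$ together with the multilinearity of the determinant in its columns. Writing $x_1,\dots,x_m\in\R^m$ for the columns of $X$, the $j$-th column of $X+t\J_m$ is $x_j+t\one_m$, since every column of $\one_m\one_m^{\top}$ equals $\one_m$. I would then expand $\det(x_1+t\one_m,\dots,x_m+t\one_m)$ by multilinearity into a sum of $2^m$ determinants indexed by the subset $S\subseteq\{1,\dots,m\}$ of columns in which $x_j$ is replaced by $t\one_m$.

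The key simplification is that any term with $|S|\ge 2$ has two identical columns, both equal to $t\one_m$, and hence vanishes. Only the empty set and the singletons survive: the empty set contributes $\det(X)$, and the singleton $S=\{j\}$ contributes $t\,\det(x_1,\dots,\one_m,\dots,x_m)$ with $\one_m$ placed in column $j$. Expanding this last determinant by cofactors along column $j$ gives $\sum_{i=1}^m C_{ij}$, where $C_{ij}$ is the $(i,j)$ cofactor of $X$; note that $C_{ij}$ is unaffected by the substitution because computing it deletes column $j$ altogether, so it is genuinely a cofactor of $X$.

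Summing over $j$ then yields $t\sum_{i,j}C_{ij}$, the sum of all cofactors of $X$. Here I would invoke the defining relation $\adj(X)_{ji}=C_{ij}$, namely that the adjugate is the transpose of the cofactor matrix, to identify $\sum_{i,j}C_{ij}=\sum_{i,j}\adj(X)_{ji}=\one_m^{\top}\adj(X)\one_m$, which completes the identity. The only real bookkeeping hazard is this transpose in the definition of $\adj$; since we sum over all index pairs $(i,j)$ the distinction is ultimately harmless, but I would state it explicitly to keep the argument honest.

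As an alternative one could first treat invertible $X$, where $\det(X+t\one_m\one_m^{\top})=\det(X)\bigl(1+t\,\one_m^{\top}X^{-1}\one_m\bigr)$ by the rank-one matrix determinant lemma, then substitute $X^{-1}=\adj(X)/\det(X)$ and extend to singular $X$ by a density or polynomial-identity argument, both sides being polynomials in the entries of $X$ and in $t$. I prefer the multilinearity route because it is entirely self-contained and avoids the separate limiting step.
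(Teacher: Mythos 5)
Your multilinearity expansion is exactly the paper's argument: expand $\det(X+t\J_m)$ column by column, kill every term with two or more copies of $t\one_m$ by repetition of columns, and identify the surviving linear coefficient as the sum of all cofactors, i.e.\ $\one_m^{\top}\adj(X)\one_m$. The proposal is correct and just spells out the cofactor bookkeeping that the paper leaves implicit.
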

	
	\begin{proof}
		Since $\J_m=\one_m\one_m^{\top}$ has rank $1$, the map $t\mapsto \det(X+t\J_m)$ is a polynomial in $t$
		of degree at most $1$.
		Expanding $\det(X+t\J_m)$ by multilinearity in the columns, any term involving two columns from $t\J_m$
		vanishes because those two columns are identical. Hence only the terms with either no column from $t\J_m$
		(giving $\det X$) or exactly one column from $t\J_m$ contribute, and the coefficient of $t$ is precisely
		$\one_m^{\top}\adj(X)\one_m$.
	\end{proof}
	
	\begin{lem}\label{lem:skewfacts}
		Let $Y$ be an $m\times m$ skew-symmetric real matrix (so $Y^{\top}=-Y$). Then
		\[
		\adj(Y)^{\top}=\adj(Y^{\top})=\adj(-Y)=(-1)^{m-1}\adj(Y).
		\]
		In particular:
		\begin{enumerate}[label=\textup{(\arabic*)}, leftmargin=2.2em]
			\item if $m$ is even then $\adj(Y)$ is skew-symmetric and $\one_m^{\top}\adj(Y)\one_m=0$;
			\item if $m$ is odd then $\det(Y)=0$.
		\end{enumerate}
	\end{lem}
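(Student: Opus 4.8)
The plan is to assemble the three displayed equalities from two elementary properties of the adjugate and then read off the two special cases by a parity argument. First I would record the two facts I need: (a) the adjugate commutes with transposition, i.e.\ $\adj(X^{\top})=\adj(X)^{\top}$ for every square matrix $X$; and (b) the adjugate is homogeneous of degree $m-1$, i.e.\ $\adj(cX)=c^{m-1}\adj(X)$ for every scalar $c$. Both follow directly from the cofactor description of the adjugate: the $(i,j)$ entry of $\adj(X)$ equals $(-1)^{i+j}$ times the $(j,i)$ minor of $X$, which is a polynomial of degree $m-1$ in the entries of $X$. Property~(a) holds because transposing $X$ transposes each minor and interchanges the roles of the row and column indices; property~(b) holds because each such minor is homogeneous of degree $m-1$.

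With these in hand the central chain is immediate. By (a), $\adj(Y)^{\top}=\adj(Y^{\top})$; by skew-symmetry $Y^{\top}=-Y$, this equals $\adj(-Y)$; and by (b) with $c=-1$, this equals $(-1)^{m-1}\adj(Y)$. This yields all three claimed equalities at once.

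For the two consequences I would specialize on the parity of $m$. If $m$ is even then $(-1)^{m-1}=-1$, so the chain reads $\adj(Y)^{\top}=-\adj(Y)$, i.e.\ $\adj(Y)$ is skew-symmetric; and any skew-symmetric matrix $Z$ satisfies $v^{\top}Zv=0$ for every vector $v$, since $v^{\top}Zv=(v^{\top}Zv)^{\top}=v^{\top}Z^{\top}v=-v^{\top}Zv$ forces $v^{\top}Zv=0$. Taking $v=\one_m$ gives $\one_m^{\top}\adj(Y)\one_m=0$. If $m$ is odd, I would instead compare determinants directly: from $Y^{\top}=-Y$ together with $\det(Y^{\top})=\det Y$ and $\det(-Y)=(-1)^{m}\det Y=-\det Y$, we obtain $\det Y=-\det Y$, hence $\det Y=0$.

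I do not anticipate a genuine obstacle here: every step is a standard property of the adjugate, and the only mild care required is to justify the transpose-compatibility~(a) and the homogeneity~(b) from the cofactor formula before chaining them, after which the identity and both special cases drop out mechanically.
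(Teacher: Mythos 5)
Your proof is correct and follows essentially the same route as the paper: both derive the displayed chain from $\adj(X^{\top})=\adj(X)^{\top}$ and $\adj(cX)=c^{m-1}\adj(X)$, then read off part (1) via skew-symmetry of $\adj(Y)$ and $v^{\top}Zv=0$, and part (2) via $\det Y=\det(-Y)=-\det Y$. The only difference is that you spell out the justification of the two adjugate facts from the cofactor formula, which the paper simply cites.
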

	
	\begin{proof}
		The displayed identity follows from $\adj(Z^{\top})=\adj(Z)^{\top}$ and
		$\adj(\lambda Z)=\lambda^{m-1}\adj(Z)$ for scalars $\lambda$.
		If $m$ is even, then $\adj(Y)^{\top}=-\adj(Y)$, so $\adj(Y)$ is skew-symmetric. In particular,
		for any skew-symmetric matrix $S$ and any vector $x$ one has $x^{\top}Sx=0$, hence
		$\one_m^{\top}\adj(Y)\one_m=0$.
		If $m$ is odd, then $\det(Y)=\det(Y^{\top})=\det(-Y)=-\det(Y)$, hence $\det(Y)=0$.
	\end{proof}
	
	\begin{proof}[Proof of Conjecture~\ref{conj:johnson}] For $n=2$, the identity \eqref{eq:johnson} can be verified by direct computation. Hence, in the remainder of the proof we assume $n\ge 3$.
		
Set $B:=A-\J_n$.
	Then $B$ is Toeplitz. By $A+A^{\top}=2 J_n$, we have
	\[
	B^{\top}=A^{\top}-\J_n=(2\J_n-A)-\J_n=-(A-\J_n)=-B,
	\]
	so $B$ is skew-symmetric.
	
	Let $m:=n-1$ and abbreviate $\one:=\one_m$ and $\J:=\J_m$.
	Define the contiguous blocks
	\[
	K:=B_m(1,1),\qquad C:=B_m(1,2).
	\]
	Then $K$ is skew-symmetric, and $B_m(2,1)=-C^{\top}$.
	Since $A=\J_n+B$, we have
	\[
	A_m(1,1)=\J+K,\qquad A_m(1,2)=\J+C,\qquad A_m(2,1)=\J-C^{\top},
	\]
	and
	\[
	\det(\J-C^{\top})=\det\bigl((\J-C^{\top})^{\top}\bigr)=\det(\J-C).
	\]
	Thus \eqref{eq:johnson} is equivalent to
	\begin{equation}\label{eq:targetB}
		\det(\J+C)+\det(\J-C)=2\det(\J+K).
	\end{equation}
	
	Introduce the scalar functional
	$s(X):=\one^{\top}\adj(X)\one$.
	By Lemma~\ref{lem:rankone} and $\adj(-X)=(-1)^{m-1}\adj(X)$, we have
\begin{align*}
	\det(\J+C)&=\det(C)+s(C),\\
	\det(\J-C)&=\det(-C)+s(-C)=(-1)^m\det(C)+(-1)^{m-1}s(C).
\end{align*}
	Hence
	\begin{equation}\label{eq:sumJC}
		\det(\J+C)+\det(\J-C)=
		\begin{cases}
			2\det(C), & m \text{ even},\\[3pt]
			2s(C), & m \text{ odd}.
		\end{cases}
	\end{equation}
	Similarly, since $K$ is skew-symmetric, Lemma~\ref{lem:skewfacts} gives
	\begin{equation}\label{eq:detJK}
		\det(\J+K)=
		\begin{cases}
			\det(K), & m \text{ even},\\[3pt]
			s(K), & m \text{ odd}.
		\end{cases}
	\end{equation}
	Thus \eqref{eq:targetB} is equivalent to
	\begin{equation}\label{eq:reduced}
		\begin{cases}
			\det(C)=\det(K), & m \text{ even},\\[3pt]
			s(C)=s(K), & m \text{ odd}.
		\end{cases}
	\end{equation}
	
	\medskip
\noindent	\textbf{Case 1: $m$ even (equivalently, $n$ odd).}
	Since $n$ is odd and $B$ is skew-symmetric, by Lemma~\ref{lem:skewfacts}\textup{(2)}, $\det(B)=0$.
	Apply Theorem~\ref{thm:dodgson} to $B$. Since $\det(B)=0$, we obtain
	\[
	0
	=
	\det B_{n-1}(1,1)\,\det B_{n-1}(2,2)
	-
	\det B_{n-1}(1,2)\,\det B_{n-1}(2,1).
	\]
	By Toeplitz structure, $B_{n-1}(1,1)=K$ and $B_{n-1}(2,2)$ is the $(n-1)\times(n-1)$ contiguous principal
	block starting at $(2,2)$, hence also equals $K$. Moreover, $B_{n-1}(1,2)=C$ and
	$B_{n-1}(2,1)=B_m(2,1)=-C^{\top}$. Therefore
	\[
	0=\det(K)^2-\det(C)\det(-C^{\top})=\det(K)^2-\det(C)^2,
	\]
	because $m$ is even and $\det(-C^{\top})=\det(C)$.
	Hence $\det(K)^2=\det(C)^2$ as an identity in the Toeplitz parameters of $B$.
	
	Now parameterize skew-symmetric Toeplitz matrices by their upper diagonals:
	write $b_1,\dots,b_{n-1}$ for the constants on the first, second, \dots, $(n-1)$st superdiagonal.
	Then $\det(K)$ and $\det(C)$ are polynomials in $b_1,\dots,b_{n-1}$.
	Since $\R[b_1,\dots,b_{n-1}]$ is an integral domain and
	\[
	(\det(K)-\det(C))(\det(K)+\det(C))=\det(K)^2-\det(C)^2=0,
	\]
	we must have $\det(K)=\det(C)$ or $\det(K)=-\det(C)$ as polynomials.
	To fix the sign, specialize to $b_1=1$ and $b_k=0$ for $k\ge 2$.
	Then $K$ is tridiagonal with $1$ on the superdiagonal and $-1$ on the subdiagonal, so its determinant
	recurrence gives $\det(K)=1$ (since $m$ is even), while $C$ is unit lower triangular (indeed $C_{ii}=1$
	and the only other nonzero entries are $C_{i,i-2}=-1$), hence $\det(C)=1$.
	Thus $\det(K)=\det(C)$ at this specialization, and therefore $\det(K)=\det(C)$ identically.
	This proves \eqref{eq:reduced} in Case 1.
	
	\medskip
\noindent	\textbf{Case 2: $m$ odd (equivalently, $n$ even).}
	Consider the one-parameter family $M(t):=B+t\J_n$.
	Applying Theorem~\ref{thm:dodgson} to $M(t)$ gives
	\begin{align}\label{eq:DJt}
		\det(M(t))\,\det\bigl(M(t)_{n-2}(2,2)\bigr)
		&=
		\det\bigl(M(t)_{n-1}(1,1)\bigr)\,\det\bigl(M(t)_{n-1}(2,2)\bigr)\nonumber\\
		&\quad-
		\det\bigl(M(t)_{n-1}(1,2)\bigr)\,\det\bigl(M(t)_{n-1}(2,1)\bigr).
	\end{align}
	
	Since $n$ is even and $B$ is skew-symmetric, Lemma~\ref{lem:skewfacts}\textup{(1)} gives
	$s(B)=\one_n^{\top}\adj(B)\one_n=0$, hence Lemma~\ref{lem:rankone} implies $\det(M(t))=\det(B)$,
	independent of $t$.
	Likewise,
	\[
	M(t)_{n-2}(2,2)=B_{n-2}(2,2)+t\J_{n-2},
	\]
	and $B_{n-2}(2,2)$ is skew-symmetric of even order $n-2$, so again Lemma~\ref{lem:skewfacts}\textup{(1)}
	and Lemma~\ref{lem:rankone} show that $\det\bigl(M(t)_{n-2}(2,2)\bigr)$ is independent of $t$.
	Therefore the left-hand side of \eqref{eq:DJt} is constant in $t$.
	
	On the right-hand side of \eqref{eq:DJt}, Toeplitz structure gives
	\[
	M(t)_{n-1}(1,1)=K+t\J,
	\qquad
	M(t)_{n-1}(2,2)=K+t\J,
	\]
	and
	\[
	M(t)_{n-1}(1,2)=C+t\J,
	\qquad
	M(t)_{n-1}(2,1)=-C^{\top}+t\J.
	\]
	Since $m$ is odd and $K$ is skew-symmetric,  by Lemma~\ref{lem:skewfacts}\textup{(2)}, we have $\det(K)=0$,
	so Lemma~\ref{lem:rankone} gives $\det(K+t\J)=t\,s(K)$.
	Similarly,
	\[
	\det(C+t\J)=\det(C)+t\,s(C),
	\qquad
	\det(-C^{\top}+t\J)=\det(-C+t\J)=-\det(C)+t\,s(C),
	\]
	since $m$ is odd implies $\det(-C)=-\det(C)$ and $\adj(-C)=\adj(C)$.
	Substituting into \eqref{eq:DJt}, the right-hand side of \eqref{eq:DJt}  becomes
	\[
	(t\,s(K))^2-(-\det(C)+t\,s(C))(\det(C)+t\,s(C))
	=
	\det(C)^2+t^2\bigl(s(K)^2-s(C)^2\bigr).
	\]
	As the left-hand side is constant in $t$, the coefficient of $t^2$ must vanish:
	\[
	s(K)^2=s(C)^2
	\]
	as an identity in the Toeplitz parameters of $B$.
	
	\medskip
	\noindent\emph{Sign determination.}
	Write $b_1,\dots,b_{n-1}$ for the Toeplitz superdiagonal parameters of $B$ (so entries of $B$ are
	$0$ or $\pm b_k$). Then $s(K)$ and $s(C)$ are polynomials in $b_1,\dots,b_{n-1}$ with \emph{integer}
	coefficients, hence
	\[
	s(K)^2-s(C)^2=0
	\quad\text{in}\quad
	\mathbb{Z}[b_1,\dots,b_{n-1}].
	\]
	Since $\mathbb{Z}[b_1,\dots,b_{n-1}]$ is an integral domain, it follows that
	either $s(K)=s(C)$ or $s(K)=-s(C)$ as polynomials over $\mathbb{Z}$.
	
	To decide the sign, evaluate over $\mathbb{Q}$ at the specialization $b_1=1$ and $b_k=0$ for $k\ge 2$.
	Then $C$ has the form $C=I-L^2$, where $L$ is the nilpotent shift matrix with $1$ on the subdiagonal.
	Since $L^{m}=0$ and $m=2\ell+1$ is odd, we have the finite Neumann series
	\[
	C^{-1}=(I-L^2)^{-1}=I+L^2+L^4+\cdots+L^{2\ell},
	\]
	so $\det(C)=1$ and $s(C)=\one^{\top}C^{-1}\one$.
	A direct computation shows
	\[
	C^{-1}\one=(1,1,2,2,\dots,\ell,\ell,\ell+1)^{\top},
	\]
	hence
	\[
	s(C)=\one^{\top}C^{-1}\one=(\ell+1)^2=\left(\frac{m+1}{2}\right)^2\neq 0
	\quad\text{in }\mathbb{Q}.
	\]
	
	For $K$, the same specialization makes $K$ tridiagonal with $1$ on the superdiagonal and $-1$ on the
	subdiagonal. Then $K$ has rank $m-1$ and $\ker(K)$ is spanned by
	\[
	u=(1,0,1,0,\dots,1)^{\top}\in\R^m.
	\]
	Since $\rank(K)=m-1$, $\adj(K)\neq 0$ and $\rank(\adj(K))=1$, and the identity $K\,\adj(K)=0$ implies
	$\adj(K)=u v^{\top}$ for some $v$. Because $m$ is odd and $K$ is skew-symmetric,
	Lemma~\ref{lem:skewfacts} gives $\adj(K)^{\top}=\adj(K)$, so $u v^{\top}$ is symmetric and hence
	$v=c\,u$ for some scalar $c$. Thus $\adj(K)=c\,u u^{\top}$.
	To determine $c$, note that
	\[
	(\adj(K))_{11}=\det\bigl(K_{m-1}(2,2)\bigr).
	\]
	The matrix $K_{m-1}(2,2)$ is an even-order tridiagonal skew-symmetric matrix with the same $\pm1$
	off-diagonal pattern, and its determinant equals $1$ by a standard recurrence. Therefore
	$(\adj(K))_{11}=1$, while $(u u^{\top})_{11}=u_1^2=1$, giving $c=1$. Hence $\adj(K)=u u^{\top}$ and
	\[
	s(K)=\one^{\top}\adj(K)\one=(\one^{\top}u)^2=\left(\frac{m+1}{2}\right)^2.
	\]
	In particular, $s(K)+s(C)=2\left(\frac{m+1}{2}\right)^2\neq 0$ in $\mathbb{Q}$ at this specialization, so
	we cannot have $s(K)=-s(C)$ as a polynomial identity. Therefore $s(K)=s(C)$ identically, proving
	\eqref{eq:reduced} in Case 2.
	
	Combining Cases 1 and 2 yields \eqref{eq:reduced}, hence \eqref{eq:targetB} holds, which proves
	Conjecture~\ref{conj:johnson}.

\end{proof}
	\begin{remark}\label{rem:basechange}
		Although the exposition above works over $\R$, the identity \eqref{eq:johnson} is in fact valid over
		every field $\Bbb F$ with $\operatorname{char}\Bbb F\neq 2$.
		Indeed, after writing $A=\J_n+B$ with $B$ skew-symmetric Toeplitz and parameters $b_1,\dots,b_{n-1}$,
		all determinants and all quantities $s(\cdot)=\one^{\top}\adj(\cdot)\one$ appearing in the proof belong to
		$\mathbb{Z}[b_1,\dots,b_{n-1}]$. The argument above determines the sign over $\mathbb{Q}$, hence yields
		the polynomial identity $s(K)=s(C)$ in $\mathbb{Z}[b_1,\dots,b_{n-1}]$; applying the coefficient map
		$\mathbb{Z}\to \Bbb F$ then gives the same identity over $\Bbb F$.
	\end{remark}
	
	\begin{remark}
		The hypothesis $A+A^{\top}=2\J_n$ is equivalent to writing $A=\J_n+B$ with $B$ skew-symmetric Toeplitz.
		The proof above shows that \eqref{eq:johnson} is ultimately a consequence of two standard tools:
		a rank-one expansion against $\J_n$ and the Desnanot--Jacobi identity.
	\end{remark}
	
	\section{Proof of Theorem~\ref{thm:rankone}}\label{sec:rankone}
	
\begin{proof}[Proof of Theorem~\ref{thm:rankone}]
	Write $K:=\frac{A-A^{\top}}{2}$, so $K^{\top}=-K$, and note that
	$A=K+\frac{\alpha}{2}ww^{\top}$.
	
	\medskip
	\noindent\textbf{Step 1: the case $w_i\neq 0$ for all $i$.}
	Let $D:=\diag(w_1,\dots,w_n)$ and define $B:=D^{-1}AD^{-1}$.
	Then
	\[
	B+B^{\top}
	=
	D^{-1}(A+A^{\top})D^{-1}
	=
	\alpha\,D^{-1}ww^{\top}D^{-1}
	=
	\alpha\,\J_n,
	\]
	so $B$ satisfies the hypothesis of Theorem~\ref{thm:BT}. Hence
	\[
	\sqrt{\det B_{n-1}(1,1)\ \det B_{n-1}(2,2)}
	=
	\left|\frac{\det B_{n-1}(1,2)+\det B_{n-1}(2,1)}{2}\right|.
	\]
	Moreover, for any row/column index sets $R,C$,
	$B_{R,C}=D_R^{-1}A_{R,C}D_C^{-1}$, and hence
	\[
	\det B_{R,C}
	=
	\left(\prod_{i\in R} w_i\right)^{-1}
	\left(\prod_{j\in C} w_j\right)^{-1}
	\det A_{R,C}.
	\]
	Applying this to the four contiguous minors gives the desired equality for $A$.
	
	\medskip
	\noindent\textbf{Step 2: the general case.}
	Let $\mathbf 1=(1,\dots,1)^{\top}$ and choose $\varepsilon>0$ sufficiently small so that $w_i+\varepsilon\neq 0$ for all $i$
	(e.g.\ $0<\varepsilon<\min\{|w_i|: w_i\neq 0\}$).
Set
	\[
	w^{(\varepsilon)}:=w+\varepsilon \mathbf 1,\qquad
	A^{(\varepsilon)}:=K+\frac{\alpha}{2}w^{(\varepsilon)}{w^{(\varepsilon)}}^{\top}.
	\]
	Then $A^{(\varepsilon)}+{A^{(\varepsilon)}}^{\top}
	=\alpha\,w^{(\varepsilon)}{w^{(\varepsilon)}}^{\top}$ and
	all components of $w^{(\varepsilon)}$ are nonzero. Hence Step~1 applies to
	$A^{(\varepsilon)}$, yielding
	\[
	\sqrt{\det A^{(\varepsilon)}_{n-1}(1,1)\ \det A^{(\varepsilon)}_{n-1}(2,2)}
	=
	\left|\frac{\det A^{(\varepsilon)}_{n-1}(1,2)+\det A^{(\varepsilon)}_{n-1}(2,1)}{2}\right|.
	\]
	Since $A^{(\varepsilon)}\to A$ entrywise as $\varepsilon\downarrow 0$ and
	each determinant is a polynomial in the entries, taking $\varepsilon\downarrow 0$
	gives the desired identity for $A$.
\end{proof}

	\section{Proof of Theorem~\ref{thm:accretive}}\label{sec:accretive}
In this section, we show that the Bayat--Teimoori identity admits a sharp extension
from the rank-one symmetric-part setting to the full cone of real accretive matrices.

First, we introduce a factorization lemma.
	\begin{lem}\label{lem:factor}
Let $A\in M_n(\mathbb R)$ be strictly accretive and write its symmetric-skew-symmetric decomposition as $A=H+N$,
where 
$H:=\frac{A+A^{\top}}{2}> 0,
N:=\frac{A-A^{\top}}{2}$, so $N^{\top}=-N$.
		Let $S:=H^{-1/2}NH^{-1/2}$, so $S^{\top}=-S$. Then
$A=H^{1/2}(I+S)H^{1/2}$
		and $(I+S)^{-1}$ is strictly accretive. In particular, $A$ is invertible and $A^{-1}$ is strictly accretive.
	\end{lem}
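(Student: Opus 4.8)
The plan is to establish the three assertions in order: the factorization as a formal identity, the invertibility and accretivity of $(I+S)^{-1}$ as the analytic heart of the lemma, and the statement about $A^{-1}$ as a congruence corollary of the preceding step.

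First I would verify $A=H^{1/2}(I+S)H^{1/2}$ by direct substitution. Expanding the right-hand side gives $H^{1/2}H^{1/2}+H^{1/2}\bigl(H^{-1/2}NH^{-1/2}\bigr)H^{1/2}=H+N=A$, where I use that $H^{1/2}$ is the symmetric positive-definite square root of $H$ (which exists because $H>0$) and that it cancels against $H^{-1/2}$. This step is purely algebraic and uses only $H>0$.

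Next I would show that $I+S$ is strictly accretive and hence invertible. Since $S^{\top}=-S$, one has $x^{\top}Sx=0$ for every $x$, so the symmetric part of $I+S$ equals $\tfrac12\bigl((I+S)+(I-S)\bigr)=I>0$; in particular $x^{\top}(I+S)x=\|x\|^2>0$ for $x\neq 0$, forcing $\ker(I+S)=\{0\}$, so $I+S$ is invertible. The crux is then to compute the symmetric part of $(I+S)^{-1}$. Here the key observation is that $I+S$ and $I-S$ commute, because $(I+S)(I-S)=I-S^2=(I-S)(I+S)$; consequently their inverses commute and one obtains the clean identity $(I+S)^{-1}+(I-S)^{-1}=2(I-S^2)^{-1}$. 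Since $(I+S)^{\top}=I-S$, the left-hand side is exactly twice the symmetric part of $(I+S)^{-1}$, so that symmetric part equals $(I-S^2)^{-1}$. Finally $-S^2=S^{\top}S\ge 0$, whence $I-S^2=I+S^{\top}S\ge I>0$ and its inverse is symmetric positive definite; therefore $(I+S)^{-1}$ is strictly accretive.

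For the last assertion I would note that $A$ is a product of the invertible matrices $H^{1/2}$, $I+S$, $H^{1/2}$, so $A$ is invertible with $A^{-1}=H^{-1/2}(I+S)^{-1}H^{-1/2}$. Taking transposes and using $(I+S)^{-\top}=(I-S)^{-1}$, the symmetric part of $A^{-1}$ becomes $H^{-1/2}\,\tfrac{(I+S)^{-1}+(I-S)^{-1}}{2}\,H^{-1/2}=H^{-1/2}(I-S^2)^{-1}H^{-1/2}$, which is a congruence of the positive-definite matrix $(I-S^2)^{-1}$ by the invertible symmetric matrix $H^{-1/2}$; as congruence by an invertible matrix preserves positive definiteness, $A^{-1}$ is strictly accretive. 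I expect the only genuinely non-formal point to be the commutativity step that collapses $(I+S)^{-1}+(I-S)^{-1}$ to $2(I-S^2)^{-1}$; once that identity is in hand, every remaining inequality reduces to the elementary facts that $S^{\top}S\ge 0$ and that definiteness is invariant under congruence.
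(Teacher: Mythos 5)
Your proposal is correct and follows essentially the same route as the paper: the same factorization check, the same identity $(I+S)^{-1}+(I+S)^{-\top}=2(I-S^2)^{-1}$ obtained from $(I+S)(I-S)=I-S^2$, the same positivity argument via $-S^2=S^{\top}S\ge 0$, and the same congruence step for $A^{-1}$. Your explicit preliminary verification that $I+S$ is invertible (via $x^{\top}(I+S)x=\|x\|^2$) is a small tidiness improvement over the paper, which inverts $I+S$ before justifying invertibility, but the argument is otherwise identical.
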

	
	\begin{proof}
		The factorization $A=H^{1/2}(I+S)H^{1/2}$ is immediate from the definition of $S$.
		
		Since $S^{\top}=-S$, we have $(I+S)(I-S)=I-S^2$, hence
		\[
		(I+S)^{-1}=(I-S)(I-S^2)^{-1}.
		\]
		Taking symmetric parts,
		\[
		(I+S)^{-1}+(I+S)^{-\top}
		=(I-S)(I-S^2)^{-1}+(I+S)(I-S^2)^{-1}
		=2(I-S^2)^{-1}.
		\]
		Because $S$ is real skew-symmetric, $-S^2\ge 0$ (indeed $x^{\top}(-S^2)x=\|Sx\|^2\ge 0$),
		hence $I-S^2> 0$, so $(I-S^2)^{-1}> 0$.
		Therefore $\Ree\bigl((I+S)^{-1}\bigr)=(I-S^2)^{-1}> 0$, i.e.\ $(I+S)^{-1}$ is strictly accretive.
		
		Finally,	$A^{-1}=H^{-1/2}(I+S)^{-1}H^{-1/2}$
		is strictly accretive as a congruence of a strictly accretive matrix.
	\end{proof}
	
The following two lemmas concern the sign of determinants and the accretivity of the adjugate.
	\begin{lem}\label{lem:detpos}
		If $A\in\mnr$ is accretive, then $\det A\ge 0$. If $A$ is strictly accretive, then $\det A>0$.
	\end{lem}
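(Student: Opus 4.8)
The plan is to handle the strictly accretive case first by means of the factorization in Lemma~\ref{lem:factor}, and then recover the general accretive case by a simple perturbation. Assuming $A$ is strictly accretive, I would invoke Lemma~\ref{lem:factor} to write $A=H^{1/2}(I+S)H^{1/2}$ with $H>0$ and $S=H^{-1/2}NH^{-1/2}$ real skew-symmetric. Multiplicativity of the determinant then gives
\[
\det A=\det(H^{1/2})^2\det(I+S)=\det(H)\,\det(I+S),
\]
and since $H>0$ we have $\det(H)>0$. It therefore remains only to determine the sign of $\det(I+S)$.

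The crux is to show $\det(I+S)>0$. Because $S$ is real skew-symmetric, its eigenvalues are purely imaginary and occur in conjugate pairs $\pm i\mu_1,\dots,\pm i\mu_p$ together with zeros, so the eigenvalues of $I+S$ are $1\pm i\mu_j$ (and $1$ with the appropriate multiplicity). Hence
\[
\det(I+S)=\prod_{j=1}^{p}(1+i\mu_j)(1-i\mu_j)=\prod_{j=1}^{p}(1+\mu_j^2)\ge 1>0.
\]
Equivalently, one can note $(I+S)^{\top}=I-S$, so $\det(I+S)^2=\det\bigl((I+S)(I-S)\bigr)=\det(I-S^2)>0$ since $-S^2\ge 0$, and then fix the sign by the homotopy $t\mapsto\det(I+tS)$ on $t\in[0,1]$, which never vanishes and equals $1$ at $t=0$. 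Either way, combining with the displayed factorization yields $\det A>0$ whenever $A$ is strictly accretive.

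For the general accretive case, where $H\ge 0$ may be singular and the factorization is unavailable, I would perturb. Set $A_\varepsilon:=A+\varepsilon I$ for $\varepsilon>0$, so that $A_\varepsilon+A_\varepsilon^{\top}=2(H+\varepsilon I)>0$; thus $A_\varepsilon$ is strictly accretive and the previous case gives $\det A_\varepsilon>0$. Since $\det$ is a polynomial, hence continuous, in the matrix entries, letting $\varepsilon\downarrow 0$ gives $\det A=\lim_{\varepsilon\downarrow 0}\det A_\varepsilon\ge 0$, as claimed.

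The main obstacle is the sign determination of $\det(I+S)$: the factorization and the perturbation are routine, but establishing that $\det(I+S)$ is strictly positive (rather than merely nonzero) is where the spectral structure of real skew-symmetric matrices is essential, and this is the step on which the whole argument turns.
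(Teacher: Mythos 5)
Your proposal is correct and follows essentially the same route as the paper: Lemma~\ref{lem:factor} gives $A=H^{1/2}(I+S)H^{1/2}$, the eigenvalue pairing $\pm i\mu_k$ of the real skew-symmetric $S$ yields $\det(I+S)=\prod_k(1+\mu_k^2)>0$, and the perturbation $A_\varepsilon=A+\varepsilon I$ handles the merely accretive case by continuity. The alternative sign argument via $\det(I+S)^2=\det(I-S^2)$ and the homotopy $t\mapsto\det(I+tS)$ is a fine extra, but the core argument coincides with the paper's.
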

	
	\begin{proof}
		Assume first that $A$ is strictly accretive. By Lemma~\ref{lem:factor},
		$A=H^{1/2}(I+S)H^{1/2}$ with $H> 0$ and $S^{\top}=-S$.
		Thus $\det H>0$.
		Moreover, the eigenvalues of a real skew-symmetric matrix $S$ are purely imaginary and come in conjugate
		pairs $\{\pm i\mu_k\}$ (and possibly $0$), hence the eigenvalues of $I+S$ are $\{1\pm i\mu_k\}$ (and
		possibly $1$). Therefore the conjugate pairs contribute positive factors:
		\[
		\det(I+S)=\prod_k (1+\mu_k^2)>0,
		\]
		so $\det A=(\det H)\det(I+S)>0$.
		
		For merely accretive $A$, consider $A_\varepsilon:=A+\varepsilon I$.
		Then $\Ree A_\varepsilon=\Ree A+\varepsilon I> 0$, so $\det(A_\varepsilon)>0$ for all $\varepsilon>0$.
		By continuity of the determinant, $\det A=\lim_{\varepsilon\downarrow 0}\det(A_\varepsilon)\ge 0$.
	\end{proof}
	
	\begin{prop}\label{prop:adj-accretive}
		If $A\in\mnr$ is accretive, then $\adj(A)$ is accretive as well.
	\end{prop}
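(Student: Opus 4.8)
The plan is to reduce the problem to the strictly accretive case through the identity $\adj(A)=(\det A)\,A^{-1}$, and then to recover the general accretive case by perturbing $A$ and passing to a limit.

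First I would dispose of the strictly accretive case. Suppose $A$ is strictly accretive. Then Lemma~\ref{lem:factor} ensures $A$ is invertible with $A^{-1}$ strictly accretive, while Lemma~\ref{lem:detpos} gives $\det A>0$. Since $A$ is invertible we have $\adj(A)=(\det A)\,A^{-1}$, and therefore
\[
\Ree\bigl(\adj(A)\bigr)=(\det A)\,\Ree\bigl(A^{-1}\bigr)>0,
\]
because a strictly positive scalar times a positive definite matrix is positive definite. Thus $\adj(A)$ is strictly accretive, in particular accretive.

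For a merely accretive $A$, the factorization of Lemma~\ref{lem:factor} is unavailable (it requires $H>0$), so I would perturb. For $\varepsilon>0$ set $A_\varepsilon:=A+\varepsilon I$; then $\Ree A_\varepsilon=\Ree A+\varepsilon I>0$, so $A_\varepsilon$ is strictly accretive and the previous step shows $\adj(A_\varepsilon)$ is accretive. Every entry of the adjugate is a polynomial in the matrix entries, so $\adj(A_\varepsilon)\to\adj(A)$ entrywise as $\varepsilon\downarrow 0$.

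The only point needing care is that accretivity survives this limit, and this is where care is needed rather than where any genuine difficulty lies: the condition ``$X$ accretive'' is the requirement that $y^{\top}\bigl(\tfrac{X+X^{\top}}{2}\bigr)y\ge 0$ for every $y\in\R^n$, a family of inequalities that is closed under entrywise limits of $X$. Applying this with $X=\adj(A_\varepsilon)$ and letting $\varepsilon\downarrow 0$ yields that $\adj(A)$ is accretive. In effect all the substantive content sits in Lemmas~\ref{lem:factor} and~\ref{lem:detpos}; the main obstacle here is merely organizing the strictly accretive reduction and invoking the closure of the accretive cone correctly.
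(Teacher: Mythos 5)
Your proposal is correct and follows essentially the same route as the paper: reduce to the strictly accretive case via $\adj(A)=(\det A)\,A^{-1}$ using Lemmas~\ref{lem:factor} and~\ref{lem:detpos}, then handle general accretive $A$ by perturbing with $\varepsilon I$ and using continuity of the adjugate together with the closedness of the accretive cone. No gaps.
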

	
	\begin{proof}
		Assume first that $A$ is strictly accretive. Then by Lemma~\ref{lem:factor}, $A^{-1}$ is strictly accretive.
		By Lemma~\ref{lem:detpos}, $\det A>0$, and thus
		\[
		\adj(A)=(\det A)\,A^{-1}
		\]
		is a positive scalar multiple of a strictly accretive matrix, hence strictly accretive.
		
		For general accretive $A$, set $A_\varepsilon=A+\varepsilon I$.
		Then $\adj(A_\varepsilon)$ is accretive for each $\varepsilon>0$ by the previous paragraph.
		Since $\adj(\cdot)$ is polynomial in the entries (hence continuous) and the accretive cone
		$\{X\in\mnr:\Ree X\ge 0\}$ is closed, letting $\varepsilon\downarrow 0$ yields that $\adj(A)$ is accretive.
	\end{proof}

	\begin{proof}[Proof of Theorem~\ref{thm:accretive}]
		Consider the $2\times 2$ principal submatrix of $\adj(A)$ indexed by $\{1,n\}$:
		\[
		M:=
		\begin{pmatrix}
			(\adj(A))_{11} & (\adj(A))_{1n}\\
			(\adj(A))_{n1} & (\adj(A))_{nn}
		\end{pmatrix}.
		\]
		By Proposition~\ref{prop:adj-accretive}, $\adj(A)$ is accretive. Since taking a principal submatrix
		preserves positive semidefiniteness of the symmetric part, $M$ is accretive as well. Therefore its
		symmetric part $\Ree M=\frac{M+M^{\top}}{2}$ is a real symmetric positive semidefinite $2\times 2$ matrix,
		so its determinant is nonnegative:
		\[
		(\adj(A))_{11}(\adj(A))_{nn}
		\ \ge\
		\left(\frac{(\adj(A))_{1n}+(\adj(A))_{n1}}{2}\right)^2.
		\]
		Taking square roots and absolute values gives
		\[
		\sqrt{(\adj(A))_{11}(\adj(A))_{nn}}
		\ \ge\
		\left|\frac{(\adj(A))_{1n}+(\adj(A))_{n1}}{2}\right|.
		\]
		
		Finally, identify these adjugate entries with contiguous minors.
		By the cofactor definition of $\adj(A)$,
		\[
		(\adj(A))_{11}=\det A_{n-1}(2,2),\qquad
		(\adj(A))_{nn}=\det A_{n-1}(1,1),
		\]
		and
		\[
		(\adj(A))_{1n}=(-1)^{1+n}\det A_{n-1}(1,2),\qquad
		(\adj(A))_{n1}=(-1)^{1+n}\det A_{n-1}(2,1).
		\]
		The common factor $(-1)^{1+n}$ disappears under the absolute value, and we obtain \eqref{eq:ineq}.
	\end{proof}
We explain why the condition $\frac{A+A^*}{2}\ge 0$ is not sufficient for \eqref{eq:ineq} over $\mathbb C$.
	\begin{remark}\label{rem:complex}
		The inequality \eqref{eq:ineq} is fundamentally \emph{transpose-based}. Over $\C$, if one adopts the
		standard operator-theoretic notion of accretivity, namely $\frac{A+A^*}{2}\ge 0$, then the direct
		complex analogue of \eqref{eq:ineq} (with the same transpose-based minors on the right-hand side) can fail
		already for $n=4$.
		
		For instance, the following matrix $A\in M_4(\C)$ satisfies $\frac{A+A^*}{2}\ge 0$, but violates
		\eqref{eq:ineq}:
		\[\tiny
		A=
		\begin{pmatrix}
			9.94929343 +1.33276616i &  0.97565055 +0.87236575i & -2.50825051 +5.42561737i &  1.56748356 -7.27519505i\\
			2.97979149 -0.40625902i &  3.79277890 -0.31914688i &  0.54972864 +0.60571431i &  0.32023125 +2.04703155i\\
			-1.05662545-10.34778593i &  1.98753540 -2.33447293i &  9.41578815 -0.76975962i & -7.77317132 -1.83670880i\\
			-0.08351591 +4.49741713i &  1.36270989 -0.46531832i & -8.74961119 +1.90215917i & 16.31271805 -0.21461055i
		\end{pmatrix}.
		\]
		A direct numerical computation yields
		\[
		\sqrt{|\det A_{3}(1,1)\det A_{3}(2,2)|}\approx 168.78
		\quad<\quad
		\left|\frac{\det A_{3}(1,2)+\det A_{3}(2,1)}{2}\right|\approx 171.91.
		\]
		Thus Theorem~\ref{thm:accretive} is sharp in the real accretive setting and does not extend verbatim to
		complex accretive matrices defined via $A^*$.
	\end{remark}
	
	\section*{Acknowledgments}
	Teng Zhang is supported by the China Scholarship Council, the Young Elite Scientists Sponsorship Program
	for PhD Students (China Association for Science and Technology), and the Fundamental Research Funds for
	the Central Universities at Xi'an Jiaotong University (Grant No.~xzy022024045).

\end{document}